\newcommand{\IN}{\mathbb N}
\newcommand{\w}{\omega}
\newtheorem{theorem}{Theorem}
\newtheorem{corollary}{Corollary}
\newtheorem{problem}{Problem}
\newtheorem{lemma}{Lemma}
\newtheorem{claim}{Claim}
\title{Means on scattered compacta}
\author{T.~Banakh, R.~Bonnet, W.~Kubi\'s}
\thanks{
Research of the second author was supported by the European Science Foundation, ``New Frontiers of Infinity'', Short Visit Grant No. 4696. Research of the third author was supported by the GA\v{C}R grant P
201/12/0290 (Czech Republic)
}
\address{T.Banakh: Jan Kochanowski University in Kielce, Poland \textit{and} Ivan Franko National University of Lviv, Ukraine}
\email{t.o.banakh@gmail.com}
\address{R.Bonnet:  Laboratoire de Math\'ematiques,
Universit\'e de Savoie, Le Bourget-du-Lac, France}
\email{bonnet@univ-savoie.fr}
\address{W.Kubi\'s: Institute of Mathematics,
Academy of Sciences of the Czech Republic}
\email{kubisw@gmail.com}
\subjclass{54G12, 54H10, 22A26, 22A30, 54D30, 54D65}
\keywords{Scattered compact space, mean operation}
\begin{document}
\begin{abstract}We prove that a separable Hausdorff topological space $X$ containing a cocountable subset homeomorphic to $[0,\omega_1]$ admits no separately continuous mean operation and no diagonally continuous $n$-mean for $n\ge 2$.
\end{abstract}

\maketitle

In this paper we construct a scattered compact space admitting no continuous mean operation, thus answering Problem 5 of \cite{BGR}.
By a {\em mean operation} on a set $X$ we understand any binary operation $\mu:X\times X\to X$ such that $\mu(x,x)=x$ and $\mu(x,y)=\mu(y,x)$ for all  $x,y\in X$. If, in addition, the mean operation is associative, then it is called a {\em semilattice operation}.

The mean operation is a partial case of an $n$-mean operation. A function $\mu:X^n\to X$ defined on the $n$th power of a space $X$ is called an {\em $n$-mean operation} (or briefly an {\em $n$-mean}) if
\begin{enumerate}
\item $\mu(x,\dots,x)=x$ for every $x\in X$ and
\item $\mu$ is {\em $S_n$-invariant} in the sense that $\mu(x_{\sigma(1)},\dots,x_{\sigma(n)})=\mu(x_1,\dots,x_n)$ for any permutation $\sigma$ of the set $\{1,\dots,n\}$ and any vector $(x_1,\dots,x_n)\in X^n$.
\end{enumerate}
It is clear that a mean is the same as a $2$-mean.

The problem of detecting topological spaces with (or without) a continuous mean is classical  in Algebraic Topology, see \cite{Au1}, \cite{Au2}, \cite{AC}, \cite{Eck}, \cite{H}, \cite{TT}. It particular,  due to Aumann \cite{Au1}, we know that for every $n\ge1$ the $n$-dimensional sphere admits no continuous mean. On the other hand, the 0-dimension sphere $S^0=\{-1,1\}$ trivially possesses such a mean. More generally, each zero-dimensional metrizable separable space, being homeomorphic to a subspace of the real line, admits a continuous semilattice operation.

On the other hand, there are  non-metrizable scattered compact Hausdorff spaces admitting no separately continuous semilattice operation. The simplest example is the compactification $\gamma\IN$ of the discrete space $\IN$ of natural numbers whose remainder $\gamma\IN\setminus\IN$ is homeomorphic to the ordinal segment $[0,\omega_1]$. The existence of such a compactification $\gamma\IN$ follows from the famous Parovichenko theorem \cite{Par} (saying that any compact space of weight $\le\aleph_1$ is a continuous image of $\beta\IN\setminus\IN$).

Another way to construct $\gamma\IN$ is as follows.
Consider a family $\mathcal A = (A_\alpha)_{ \alpha < \omega_1 }$
of infinite subsets of $\IN$ such that $A_\alpha\subset^* A_\beta$ for any ordinals $\alpha<\beta$. The almost inclusion $A_\alpha\subset^* A_\beta$ means that $A_\alpha\setminus A_\beta$ is finite.
Now, consider the subalgebra $B$ of $\mathcal P(\IN)$
generated  by $\mathcal A\cup \big\{ \{n\} \big\}_{ n \in \IN }$.
Then $\gamma\IN$ is the space of ultrafilters on $B$.

A bit stronger notion than the separate continuity is the diagonal continuity. A function $f:X^n\to Y$ is called {\em diagonally continuous} if for any map $g=(g_i)_{i=1}^n:X\to X^n$ whose components $g_i:X\to X$, $1\le i\le n$, are constant or identity functions the composition $f\circ g:X\to Y$ is continuous. It is clear that for a function $f:X^n\to Y$ we get the implications:
\smallskip

\centerline{continuous $\Rightarrow$ diagonally continuous $\Rightarrow$ separately continuous.}
\smallskip

A subset $A$ of a set $X$ is called {\em cocountable} if its complement $X\setminus A$ is at most countable. The following theorem is the main result of this paper.

\begin{theorem}\label{t1}If a separable Hausdorff topological space $X$ contains a cocountable subset homeomorphic to $[0,\omega_1]$, then for every $n\ge 2$ the space $X$ admits no diagonally continuous $n$-mean $\mu:X^n\to X$.
\end{theorem}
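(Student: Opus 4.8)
The plan is to argue by contradiction: suppose $\mu\colon X^n\to X$ is a diagonally continuous $n$-mean, fix a countable dense set $D\subseteq X$, and let $C\subseteq X$ be the cocountable subset with a fixed homeomorphism $C\cong[0,\omega_1]$; write $p_\alpha$ ($\alpha\le\omega_1$) for the points of $C$ and put $e:=p_{\omega_1}$.

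First I would collect the standard reductions. Since $D\cap C$ is a countable subset of $C\cong[0,\omega_1]$ it is bounded, so there is $\gamma_0<\omega_1$ with $D\cap C\subseteq\{p_\alpha:\alpha\le\gamma_0\}\cup\{e\}$; as $\{p_\alpha:\alpha\le\gamma_0\}$ is compact, hence closed in the Hausdorff space $X$, density of $D$ forces the countable set $E:=D\setminus C\subseteq X\setminus C$ to accumulate at every point of the tail $\{p_\alpha:\gamma_0<\alpha\le\omega_1\}$; in particular $e\in\overline E$ (the case $e\in D$ being handled with $E\cup\{e\}$ in place of $E$). Two further elementary facts will be used: (i) no sequence in $C\setminus\{e\}$ converges to $e$, because such a sequence is bounded away from $\omega_1$; (ii) if $h\colon[0,\omega_1]\to X$ is continuous and $h\bigl([0,\omega_1)\bigr)$ is countable, then $h\bigl([0,\omega_1]\bigr)$ is a countable compact Hausdorff space, hence metrizable, hence $h$ is constant on a tail of $[0,\omega_1]$. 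Finally, diagonal continuity provides a stock of continuous maps built from $\mu$: for each $c\in X$ the map $x\mapsto\mu(x,c,\dots,c)$ is continuous, with fixed point $e$ when $c=e$, and more generally substituting constants in some coordinates and the running variable in the others (then precomposing with a diagonal) yields continuous maps, while $S_n$-invariance lets one permute which coordinates play which role; one also checks by successive approximation along $D$ that $\mu(D^n)$ is dense in $X$, so $D$ may be assumed $\mu$-closed.

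The heart of the matter is a pressing-down (Fodor-type) transfinite recursion. Using that $E$ accumulates on the whole tail of $C$, the continuity of the coordinate maps of $\mu$, and idempotence $\mu(x,\dots,x)=x$, I would construct a strictly increasing sequence $(\alpha_k)_{k<\omega}$ in $(\gamma_0,\omega_1)$, points $d_k\in E$, and a shrinking chain of open neighbourhoods, so that at step $k$ the point $d_k\in X\setminus C$ certifies that the value of $\mu$ at $(p_{\alpha_k},\dots)$ can be pushed, through a point off $C$, into the $k$-th prescribed neighbourhood. For $n\ge3$ this is exactly where diagonal, rather than merely separate, continuity is needed, since the running approximation must be substituted in several coordinates simultaneously; for $n=2$ diagonal and separate continuity coincide, so the argument also yields the non-existence of a separately continuous mean. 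Passing to $\lambda:=\sup_k\alpha_k<\omega_1$ and using continuity along $C$ at $p_\lambda$ together with idempotence applied at the all-$p_\lambda$ tuple, one route pins the limiting value to lie on $C$ (in fact to be $p_\lambda$), while the route through the countably many $\{d_k:k<\omega\}\subseteq X\setminus C$ forces that same value to avoid a fixed neighbourhood of $p_\lambda$; the contradiction completes the proof.

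The one genuine obstacle is that $e=p_{\omega_1}$ need not have countable character in $X$, so a continuous map $[0,\omega_1]\to X$ need not be eventually constant — the homeomorphism $[0,\omega_1]\cong C\hookrightarrow X$ is itself a counterexample — and hence $\mu$ cannot simply be ``frozen'' on a tail of $C$. Separability must therefore enter the proof essentially rather than cosmetically, and the device that makes it work is precisely the countable set $E$ that lives off $C$ yet accumulates on the entire tail of $C$: it renders the two incompatible ways of approaching $e$ — through $C$, impossible along a sequence by (i), and through $E$, possible along a sequence — visible to a countable recursion in which fact (ii) can finally be brought to bear. I would expect the delicate technical point of the write-up to be the bookkeeping in the recursion — each chosen neighbourhood must be small enough to be eventually violated yet large enough for the next approximation step to be possible — and I would anticipate the authors isolating this as a lemma about separately continuous maps $[0,\omega_1]\times[0,\omega_1]\to X$ with values in a separable space.
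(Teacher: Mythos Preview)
Your outline never actually exhibits a contradiction. The ``heart of the matter'' paragraph promises a recursion producing $(\alpha_k)_{k<\omega}$, $d_k\in E$, and a ``limiting value'' computed two ways at $\lambda=\sup_k\alpha_k$, but you never say \emph{which} value of $\mu$ this is, nor why the route through the $d_k$ forces it away from $p_\lambda$. Since the $d_k$ vary with $k$, there is no single continuous expression in $p_{\alpha_k}$ whose limit at $p_\lambda$ is constrained by your construction; and if you pigeonhole to fix a single $d$, a countable recursion gives no control over which $\alpha_k$ survive. More basically, a countable limit $\lambda<\omega_1$ is the wrong place to look: $p_\lambda$ has countable character inside $C$, so diagonally continuous maps behave perfectly well there, and nothing in idempotence plus separability obstructs $\mu$ at such a point. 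The genuine tension lives at $\omega_1$ (uncountable cofinality) and in club combinatorics on $[0,\omega_1)$, neither of which your $\omega$-length scheme touches.

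The paper's argument is structurally quite different from what you anticipate. It proceeds by induction on $n$, and to make the induction go it relaxes ``$n$-mean'' to ``$n$-amean'' ($S_n$-invariant with $\{x:\mu(x,\dots,x)\ne x\}$ countable). The inductive step is: given a diagonally continuous $(n{+}1)$-amean $\mu$, the map $\nu(x_1,\dots,x_n)=\mu(x_1,\dots,x_n,\omega_1)$ is a diagonally continuous $n$-amean \emph{unless} the set $W=\{\alpha:\mu(\alpha,\dots,\alpha,\omega_1)\ne\alpha\}$ is uncountable; this residual case is then killed by a club argument. Concretely, one forms the club $C$ of $\alpha$'s on which all the ``level sets'' $b/\vec a=\{x:\mu(\vec a,x)=b\}$ and ${\downarrow}\vec b=\{x:\mu(\vec b,x)=x\}$ that are unbounded actually hit $\alpha$, and then uses the Dirichlet (pigeonhole) principle on the \emph{uncountable} set of isolated points of $W$---each such $\alpha$ selects a witness $v_\alpha$ in the countable dense set, so some fixed $v$ works for uncountably many $\alpha$---to manufacture two distinct club points $y,c$ with $\mu(v,\dots,v,y,c)=c$ and, by symmetry, $=y$. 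The base case $n=2$ is a separate (and already nontrivial) lemma of the same flavour, valid for merely separately continuous $2$-ameans. Your predicted lemma about separately continuous maps $[0,\omega_1]^2\to X$ does not appear; the decomposition is by the arity $n$, not by a product lemma.
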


\begin{proof} This theorem will be proved by induction on $n\ge 2$. More precisely, by induction we shall prove that $X$ admits no diagonally continuous $n$-amean. A function $\mu:X^n\to X$ will be called an {\em almost $n$-mean operation} (briefly, an {\em $n$-amean}) if $\mu$ is $S_n$-invariant and the set $\{x\in X:x\ne\mu(x,\dots,x)\}$ is at most countable.

Since the space $X$ is separable, we can assume that $[\omega,\omega_1] \subset X$
has countable dense complement $D = X \setminus [\omega,\omega_1]$ that we denote by $\omega$.
So $X = \omega \cup [\omega,\omega_1) \cup \{\omega_1\}$.

The following lemma will allow us to start the inductive proof of the theorem.

\begin{lemma}\label{l1} The space $X$ admits no separately continuous $2$-amean.
\end{lemma}

\begin{proof} Assume that $\mu:X^2\to X$ is a separately continuous
$2$-amean on $X$.

Given two points $a,b\in X$ consider the closed subsets $$b/a=\{x\in [\w,\omega_1):b=\mu(a,x)\}
\mbox{ \ and \ } {\downarrow}b=\{x\in[\w,\omega_1): \mu(b,x)=x\}$$ of $[\w,\omega_1)\subset X$.
Let $$A=\{(a,b)\in \w^2:|b/a|=\aleph_1\}\mbox{ and }B=\{b\in \w:|{\downarrow}b|=\aleph_1\}.$$ Find an ordinal $\alpha_0\in[\w,\omega_1)\subset X$ such that
\begin{itemize}
\item $\mu(\alpha,\alpha)=\alpha$ for all $\alpha\ge\alpha_0$;
\item $b/a\subset[\w,\alpha_0)$ for all $(a,b)\in\w^2\setminus A$;
\item ${\downarrow}b\subset[\w,\alpha_0)$ for all $b\in \w\setminus B$.
\end{itemize}
If the set $B$ has countable closure $\bar B$ in $X$, then we will additionally assume that $\bar B\cap[\w,\omega_1)\subset[\w,\alpha_0)$.

Consider the closed unbounded subset
$$C=[\alpha_0,\omega_1)\cap\bigcap_{(a,b)\in A}b/a\;\cap\bigcap_{b\in B}{\downarrow}b$$ in $[\w,\omega_1)$ and also the open subset
$$W=\{x\in (\alpha_0,\omega_1):\exists c\in C\;\;\mu(c,x)\ne x\}$$ of $[0,\w_1)$. Observe that $W\supset C\setminus C_0$ where
$C_0=\{c\in C:\forall x\in C\; \mu(x,c)=c\}$ is a subset of $C$ containing at most one point. So, $W$ is uncountable.

Let $W_0$ stand for the dense open subset of $W$ consisting of isolated points of $W$.

\begin{claim}\label{cl1a} Any point $\alpha\in W_0$ has a neighborhood $V_\alpha\subset X$ such that $\mu\big(\{\alpha\}\times V_\alpha\big)=\{\alpha\}$.
\end{claim}

\begin{proof} Using the definition of $W$, find $c\in C$ with $\mu(c,\alpha)\ne\alpha$. Choose disjoint neighborhoods $U_{\mu(c,\alpha)},U_\alpha\subset X$ of the points $\mu(c,\alpha)$ and $\alpha$.
Replacing $U_\alpha$ by a smaller neighborhood we can assume that $\mu(\{c\}\times U_\alpha)\subset U_{\mu(c,\alpha)}$ and $U_\alpha\cap[\w,\omega_1]=\{\alpha\}$. Finally, by the separate continuity of the operation $\mu$, find a neighborhood $V_\alpha\subset U_\alpha$ such that $\mu(\{\alpha\}\times V_\alpha)\subset U_\alpha$. We claim that $\mu(\alpha,a)=\alpha$ for all $a\in V_\alpha$. This is clear if $a=\alpha$. If $a\ne\alpha$, then $a\in \w$ because $V_\alpha\cap[\w,\omega_1]=\{\alpha\}$. If $b=\mu(a,\alpha)\in \w$, then $\alpha_0<\alpha\in b/a$ and consequently, $(a,b)\in A$. It follows from $c\in C$ and $(a,b)\in A$ that $c\in b/a$, which means that $\mu(a,c)=b$. The latter equality cannot hold because $\mu(c,a)\in \mu(\{c\}\times V_\alpha)\in U_{\mu(c,\alpha)}$ while $b=\mu(\alpha,a)\in\mu(\{\alpha\}\times V_\alpha)\subset U_\alpha$. This contradiction shows that $b=\mu(a,\alpha)\in[\w,\omega_1]\cap U_\alpha=\{\alpha\}$ and hence $\mu(\alpha,a)=\mu(a,\alpha)=\alpha$.
\end{proof}

\begin{claim}\label{cl2a} The set $B$ has uncountable closure $\bar B$ in $X$.
\end{claim}

\begin{proof}
Assuming that $\bar B$ is countable, we get $\bar B\cap[\w,\omega_1)\subset[\w,\alpha_0)$ by the choice of $\alpha_0$. By Claim~\ref{cl1a}, each ordinal $\alpha\in W_0$ has a neighborhood $V_\alpha\subset X$ such that $\mu(\{\alpha\}\times V_\alpha)=\{\alpha\}$. Since $\alpha\notin\bar B$ and the set $\w$ is dense in $X$, we can pick a point $v_\alpha\in \w\cap V_\alpha\setminus \bar B$. By the Dirichlet Principle, for some point $v\in \w$ the set $W_v=\{\alpha\in W_0:v_\alpha=v\}$ is uncountable. It follows that $\mu(\alpha,v)=\mu(\alpha,v_\alpha)=\alpha$ for every $\alpha\in W_v$. Consequently, $v\in B$, which  contradicts the choice of $v=v_\alpha\notin \bar B$ for $\alpha\in W_v$.
\end{proof}

Observe that for any $c\in C$ and any $b\in B$ we get $\mu(c,b)=c$. By the separate continuity of the amean $\mu$, we get $\mu(c,b)=c$ for all $b\in\bar B$. Since $C$ and $\bar B\cap[\w,\omega_1)$ are closed uncountable subsets of $[0,\omega_1)$ the intersection $C\cap \bar B$ is uncountable and thus we can chose two distinct points $x,y\in C\cap\bar B$, for which we get $x=\mu(x,y)=\mu(y,x)=y$, which is a desired contradiction completing the proof of Lemma~\ref{l1}.
\end{proof}

The inductive step of the inductive proof of Theorem~\ref{t1} is fulfilled in the following lemma.

\begin{lemma}\label{l2} If for some $n\ge 2$ the space $X$ admits no diagonally continuous $n$-amean, then it admits no diagonally continuous $(n+1)$-amean.
\end{lemma}

\begin{proof} To derive a contradiction, assume that $X$ admits a diagonally continuous $(n+1)$-amean $\mu:X^{n+1}\to X$.

For points $\vec a\in X^{n}$ and $b\in X$ consider the closed subsets $$b/\vec a=\{x\in [\w,\w_1):b=\mu(\vec a,x)\}\mbox{ \ and \ }
{\downarrow}\vec a=\{x\in[\w,\w_1):\mu(\vec a,x)=x\}$$
of $[\w,\w_1)$. Let $$A=\{(\vec a,b)\in \w^{n}\times \w:|b/\vec a|=\aleph_1\}\mbox{ \ and \ }B=\{\vec b\in \w^{n}:|{\downarrow}\vec b|=\aleph_1\}.$$ Find a countable ordinal $\alpha_0\in[\w,\w_1)$ such that
\begin{itemize}
\item $\mu(\alpha,\dots,\alpha)=\alpha$ for every $\alpha\in[\alpha_0,\w_1)$;
\item $b/\vec a\subset [\w,\alpha_0)$ for every $(\vec a,b)\in(\w^{n}\times\w)\setminus A$, and
\item ${\downarrow}\vec b\subset [\w,\alpha_0)$ for every $\vec b\in \w^{n}\setminus B$.
\end{itemize}
It follows that
$$C=[\alpha_0,\w_1)\cap\bigg(\bigcap_{(\vec a,b)\in A}b/\vec a\bigg)\cap\bigg(\bigcap_{\vec b\in B}{\downarrow}\vec b\bigg)$$is a closed unbounded subset of $[\w,\w_1)$.

Since the space $X$ admits no diagonally continuous $n$-amean, the set $$W=\{\alpha\in[\alpha_0,\w_1):\mu(\alpha,\dots,\alpha,\w_1)\ne \alpha\}$$ is uncountable (in the opposite case the function $\nu:X^n\to X$, $\nu:(x_1,\dots,x_n)\mapsto\mu(x_1,\dots,x_n,\w_1)$, is a diagonally continuous $n$-amean on $X$, which does not exist according to our assumption).

The diagonal continuity of the function $\mu$ guarantees that the set $W$ is open in $[\alpha_0,\w_1)$. Consequently, the set $W_0$ of all isolated points of $W$ is uncountable too.

\begin{claim}\label{cl1} Each point $\alpha\in W_0$ has a neighborhood $V_\alpha\subset X$ such that for any point $x\in\w\cap V_\alpha$ there is a neighborhood $V'_{\alpha}\subset X$ of $\alpha$ such that $\mu\big(\{x\}^{n-1}\times V_\alpha'\times\{\alpha\}\big)=\{\alpha\}$.
\end{claim}

\begin{proof} By the definition of $W\supset W_0\ni \alpha$, the point $z=\mu(\alpha,\dots,\alpha,\w_1)$ differs from $\alpha$, which allows us to choose disjoint open neighborhoods $U_z$ and $U_\alpha$ of the points $z$ and $\alpha$ in $X$, respectively. Since $\alpha$ is an isolated point of $[\w,\w_1]$, we can additionally assume that $U_\alpha\cap[\w,\w_1]\subset \{\alpha\}$. It follows from $\alpha\ge\alpha_0$ that $\mu(\alpha,\dots,\alpha)=\alpha$.
The diagonal continuity of the operation $\mu$ yields a neighborhood $V_\alpha\subset X$ of $\alpha$ such that for any $x\in V_\alpha$ we get  $\mu(x,\dots,x,\alpha,\alpha)\in U_\alpha$ and  $\mu(x,\dots,x,\alpha,\w_1)\in U_z$. For every $x\in \w \cap V_\alpha$ the separate continuity of $\mu$ yields a neighborhood $V_\alpha'\subset X$ of $\alpha$ such that  $\mu(\{x\}^{n-1}\times V_\alpha'\times\{\alpha\})\in U_\alpha$ and $\mu(\{x\}^{n-1}\times V_\alpha'\times\{\w_1\})\in U_z$. Choose any $y\in V_\alpha'\cap \w$. We claim that the point $u=\mu(x,\dots,x,y,\alpha)\in U_\alpha$ belongs to $[\w,\w_1]$. Assuming the converse, we conclude that $((x,\dots,x,y),u)\in A$ and hence $\mu(x,\dots,x,y,c)=u$ for all $c\in C$. On the other hand, the separate continuity of $\mu$ and the inclusion $\mu(x,\dots,x,y,\w_1)\in U_z$ yields a point $c\in C$ with $\mu(x,\dots,x,y,c)\in U_z$. Then $u=\mu(x,\dots,x,y,c)\in U_z\cap U_\alpha=\emptyset$, which is a desired contradiction showing that $\mu(x,\dots,x,y,\alpha)=u\in [\w,\w_1]\cap U_\alpha=\{\alpha\}$.
\end{proof}

\begin{claim}\label{cl2} There is a point $x\in \w$ such that the set $$B(x)=\{y\in [\w,\w_1):\forall c\in C \; \; \mu(x,\dots,x,y,c)=c\}$$ is uncountable.
\end{claim}

\begin{proof} Assume conversely that for every $x\in\w$ the set $B(x)$ is at most countable. Then we can find an ordinal $\beta\in[\alpha_0,\w_1)$ such that $[\beta,\w_1)\cap\bigcup_{x\in\w}B(x)=\emptyset$. By Claim~\ref{cl1},  every ordinal $\alpha\in W_0\cap[\beta,\w_1)$ has a neighborhood $V_\alpha\subset X$ such that for each point $v\in \w \cap V_\alpha$ there is a neighborhood $V_\alpha'\subset X$ of $\alpha$ such that $\mu(\{v\}^{n-1}\times V_\alpha'\times\{\alpha\})=\{\alpha\}$. For every ordinal $\alpha\in W_0\cap [\beta,\w_1)$ choose a point $v_\alpha\in \w \cap V_\alpha$.
By the Dirichlet Principle, for some point $v\in\w$ the set $W_v=\{\alpha\in W_0\cap[\beta,\w_1):v_\alpha=v\}$ is uncountable. So, we can choose an ordinal $\alpha\in W_v\setminus B(v)$. For the ordinal $\alpha$ and the point $v=v_\alpha\in V_\alpha$ there is a neighborhood $V_\alpha'\subset X$ of $\alpha$ such that  $\mu(\{v\}^{n-1}\times V_\alpha'\times\{\alpha\})=\{\alpha\}$.

Since the set $B(v)$ is closed (by the separate continuity of $\mu$) and does not contain $\alpha$, we can choose a point $y\in \w\cap V'_\alpha\setminus B(v)$. For this point $y$ we get $\mu(v,\dots,v,y,\alpha)=\alpha\ge\alpha_0$, which implies $(v,\dots,v,y)\in B$ and $\mu(v,\dots,v,y,c)=c$ for all $c\in C$. The latter means that $y\in B(v)$, which contradicts the choice of $y$.
\end{proof}

By Claim~\ref{cl2}, for some $x\in \w$ the closed set $B(x)$ is uncountable. Then $C\cap B(x)$ is a closed unbounded set in $[\w,\w_1)$, which allows us to find two distinct points $y,c\in C\cap B(x)$. For these points by the $S_{n+1}$-invariance of $\mu$ we get $$c=\mu(v,\dots,v,y,c)=\mu(v,\dots,v,c,y)=y,$$ which is a desired contradiction, completing the proof of Lemma~\ref{l2}.
\end{proof}

By induction, Lemmas~\ref{l1} and \ref{l2} imply that for every $n\ge 2$ the space $X$ admits no diagonally continuous $n$-amean and hence no
diagonally continuous $n$-mean.
\end{proof}

Since each separately continuous mean $\mu:X^2\to X$ is diagonally continuous, (the proof of) Lemma~\ref{l1} implies the following corollary answering Problem~5 in \cite{BGR}.

\begin{corollary}If a separable Hausdorff topological space $X$ contains a cocountable subset homeomorphic to $[0,\omega_1)$, then $X$ admits no separately continuous mean $\mu:X^2 \to X$.
\end{corollary}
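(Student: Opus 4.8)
The plan is to reduce directly to Lemma~\ref{l1}. Suppose, for contradiction, that $X$ is a separable Hausdorff space containing a cocountable copy of $[0,\omega_1)$ and that $\mu\colon X^2\to X$ is a separately continuous mean operation. Since a (genuine) mean satisfies $\mu(x,x)=x$ for all $x\in X$, it is in particular a $2$-amean; and every separately continuous map into a Hausdorff space with the property stated in the paper's diagram is (trivially for $n=2$) diagonally continuous, so we are in the setting of Lemma~\ref{l1}. The only thing that must be checked is that the hypothesis of Lemma~\ref{l1}, namely that $X$ contains a \emph{cocountable} copy of the \emph{closed} segment $[0,\omega_1]$, can be arranged from the weaker assumption that $X$ contains a cocountable copy of the \emph{half-open} segment $[0,\omega_1)$.

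The key step is therefore the following observation: if $Y\subseteq X$ is a cocountable subspace homeomorphic to $[0,\omega_1)$, then the closure $\overline Y$ in $X$ is again cocountable, and $\overline Y$ contains a cocountable copy of $[0,\omega_1]$. Indeed, as in the proof in the paper we may write $X=\omega\cup[\omega,\omega_1)\cup\{z\}$ for some countable dense $D=\omega$, where $z$ is (any) limit in $X$ of the uncountable closed unbounded set $[\omega,\omega_1)$; such a limit exists because $X$, being separable, need not be compact, but the long sequence $\{\alpha:\alpha<\omega_1\}$ has a complete accumulation point in the closure of $Y$ unless that closure is already compact — and in the latter case $\overline Y$ is a one-point compactification of $[0,\omega_1)$, which is exactly $[0,\omega_1]$. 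In either subcase one extracts inside $\overline Y$ a subspace of the form $[\omega,\omega_1]$ (a tail of the copy of $[0,\omega_1)$ together with one accumulation point) whose complement in $X$ is countable, so the hypothesis of Lemma~\ref{l1} is met and Lemma~\ref{l1} yields a contradiction.

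I expect the main (and essentially only) obstacle to be a clean treatment of exactly this point: verifying that adjoining a single accumulation point of the club $[\omega,\omega_1)$ produces a subspace homeomorphic to the \emph{closed} ordinal $[\omega,\omega_1]$ rather than to something wilder, and that this can be done while keeping the complement countable. This is where separability and the Hausdorff property are used: separability forces the uncountable club to accumulate (so the added point exists), Hausdorffness guarantees the point is a unique limit of sufficiently long tails, and a standard pressing-down / closed-unbounded argument shows the order topology on $[\omega,\omega_1]$ agrees with the subspace topology near the new point. Once this homeomorphism is in place, the Corollary follows immediately from (the proof of) Lemma~\ref{l1}, since a separately continuous mean is a fortiori a separately continuous $2$-amean on a space satisfying that lemma's hypotheses.
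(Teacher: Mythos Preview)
Your reduction has a genuine gap. You attempt to pass from a cocountable copy of $[0,\omega_1)$ to a cocountable copy of $[0,\omega_1]$ by adjoining an accumulation point of the long ray inside $X$, but no such point need exist. Take $X=\gamma\IN\setminus\{\omega_1\}$, where $\gamma\IN$ is the compactification of $\IN$ with remainder $[0,\omega_1]$ described in the paper. This $X$ is separable Hausdorff, $[0,\omega_1)$ sits inside it with countable complement $\IN$, yet every point of $\IN$ is isolated in $X$; hence $[0,\omega_1)$ is closed in $X$ and the transfinite sequence $(\alpha)_{\alpha<\omega_1}$ has \emph{no} accumulation point in $X$ at all. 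Your dichotomy ``either $\overline Y$ is compact, and then it is $[0,\omega_1]$, or the long sequence has a complete accumulation point'' is therefore false in both branches: in this example $\overline Y=Y$ is not compact and there is no accumulation point.

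The paper's argument avoids this issue entirely: it does not reduce to the closed-interval hypothesis but instead observes that the \emph{proof} of Lemma~\ref{l1} never uses the point $\omega_1$. Every set appearing in that proof ($b/a$, ${\downarrow}b$, $C$, $W$, $W_0$) lives in $[\omega,\omega_1)$, and the final contradiction is obtained from two points of the club $C\cap\bar B\subset[\omega,\omega_1)$. Thus Lemma~\ref{l1} already applies verbatim to any separable Hausdorff space with a cocountable copy of $[0,\omega_1)$, and since for $n=2$ separate continuity coincides with diagonal continuity, the corollary follows immediately. Your observation that a mean is automatically a $2$-amean and that separate continuity suffices is correct; the missing ingredient is simply to read off that the point $\omega_1$ plays no role in the proof of Lemma~\ref{l1}, rather than trying to manufacture it inside $X$.
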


\begin{problem} Let $X$ be a separable Hausdorff topological space $X$ containing a cocountable subset homeomorphic to $[0,\omega_1)$. Does $X$ admit a separately continuous $n$-mean $\mu:X^n\to X$ for some $n\ge 3$?
\end{problem}

By the {\em $n$-th symmetric power} $SP^n(X)$ of a topological space $X$ we understand the quotient space of $X^n$ by the equivalence relation $\sim$: $(x_1,\dots,x_n)\sim(y_1,\dots,y_n)$ if there is a permutation $\sigma$ of $\{1,\dots,n\}$ such that $(y_1,\dots,y_n)=(x_{\sigma(1)},\dots,x_{\sigma(n)})$. The space $X$ is identified with the subspace $\big\{\{(x,\dots,x)\}:x\in X\big\}$ of $SP^n(X)$.

Observe that $X$ is a retract of its $n$th symmetric power $SP^n(X)$ if and only if $X$ admits a continuous $n$-mean. This observation combined with Theorem~\ref{t1} implies:

\begin{corollary} If a separable Hausdorff topological space $X$ contains a cocountable subset homeomorphic to $[0,\omega_1]$, then for every $n\ge 2$ the space $X$ is not a retract of its $n$-th symmetric power $SP^n(X)$.
\end{corollary}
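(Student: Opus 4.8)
The plan is to deduce this from Theorem~\ref{t1} through the equivalence noted just above the statement: for the diagonal copy of $X$ inside $SP^n(X)$, being a retract is the same as carrying a continuous $n$-mean. I would set up notation by letting $q\colon X^n\to SP^n(X)$ be the quotient map and $d\colon X\to SP^n(X)$, $d(x)=q(x,\dots,x)$, be the diagonal map by which $X$ is identified with a subspace of $SP^n(X)$; a short verification shows that $d$ is a topological embedding whenever $X$ is Hausdorff --- indeed $q^{-1}(d(A))$ is exactly the diagonal image of $A$ in $X^n$, so $d$ is even a closed embedding.

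First I would prove the one direction actually needed. Assume towards a contradiction that $X$ is a retract of $SP^n(X)$, i.e. there is a continuous $r\colon SP^n(X)\to X$ with $r\circ d=\mathrm{id}_X$. Then I would set $\mu=r\circ q\colon X^n\to X$ and check the three defining properties: $\mu$ is continuous as a composition of continuous maps; $\mu$ is $S_n$-invariant because $q$ already collapses each $S_n$-orbit to a point, so $\mu(x_{\sigma(1)},\dots,x_{\sigma(n)})=\mu(x_1,\dots,x_n)$; and $\mu(x,\dots,x)=r(d(x))=x$. Thus $\mu$ would be a continuous $n$-mean on $X$.

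The second step is to feed this into Theorem~\ref{t1}. By the chain of implications recorded in the introduction, a continuous map is diagonally continuous, so $\mu$ is in particular a diagonally continuous $n$-mean; since $X$ is a separable Hausdorff space containing a cocountable copy of $[0,\omega_1]$ and $n\ge 2$, Theorem~\ref{t1} says no such $\mu$ exists, and this contradiction proves the corollary. For completeness I would also record the converse implication, which pins down the ``only if'' half of the observation preceding the corollary: an $S_n$-invariant continuous $\mu\colon X^n\to X$ descends, by the universal property of the quotient map $q$, to a continuous $r\colon SP^n(X)\to X$ with $r\circ q=\mu$, and then $\mu(x,\dots,x)=x$ says exactly that $r\circ d=\mathrm{id}_X$, so $r$ is a retraction.

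I do not expect any genuine obstacle here: the whole difficulty of the result is already carried by Theorem~\ref{t1}, and what remains is routine quotient-topology bookkeeping. The only two points that deserve a line of justification are that $q$ is a quotient map (so that $S_n$-invariant continuous maps out of $X^n$ descend continuously to $SP^n(X)$) and that $d$ is a homeomorphism onto its image (so that ``retract'' is used in its literal subspace sense); once these are in place the corollary is immediate.
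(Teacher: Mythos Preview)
Your proposal is correct and follows exactly the paper's approach: the paper simply records the observation that $X$ is a retract of $SP^n(X)$ if and only if $X$ admits a continuous $n$-mean, and then invokes Theorem~\ref{t1}. Your write-up merely supplies the routine details (the quotient-map descent and the diagonal embedding) that the paper leaves implicit.
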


The $n$-th symmetric power $SP^n(X)$ is a partial case of the $n$-th $G$-symmetric power $SP^n_G(X)$ where $G$ is a subgroup of the symmetric group $S_n$. The space $SP^n_G(X)$ is
the quotient space of $X^n$ by the equivalence relation $\sim_G$: $(x_1,\dots,x_n)\sim_G(y_1,\dots,y_n)$ if there is a permutation $\sigma\in G$ of $\{1,\dots,n\}$ such that $(y_1,\dots,y_n)=(x_{\sigma(1)},\dots,x_{\sigma(n)})$. The space $X$ is identified with the subspace $\big\{\{(x,\dots,x)\}:x\in X\big\}$ of $SP^n_G(X)$.

\begin{problem}\label{pr2} Let $X$ be a separable compact space containing a cocountable subset homeomorphic to $[0,\omega_1]$. Is $X$ a retract of $SP^n_G(X)$ for some $n\ge 2$ and some non-trivial subgroup $G\subset S_n$?
\end{problem}

Let us recall that a topological space $X$ is called {\em scattered\/} if each subspace $A\subset X$ has an isolated point.

\begin{problem} Assume that a scattered compact space $X$ admits a continuous $n$-mean for some $n\ge 2$. Does $X$ admit a continuous $n$-mean for every $n\ge2$?
\end{problem}

If $\vee:X\times X\to X$ is a semilattice operation on a set $X$, then for every $n\ge 2$ the map $\mu:X^n\to X$, $\mu(x_1,\dots,x_n)=x_1\vee\dots\vee x_n$ is an $n$-mean on $X$. So, a topological space admitting a continuous semilattice operation admits continuous $n$-means for all $n\ge 2$.

\begin{problem} Assume that a scattered compact space $X$ admits a continuous $n$-mean for every $n\ge2$. Does $X$ admit a continuous semilattice operation?
\end{problem}

It is known that each separately continuous semilattice operation on a zero-dimensional compact space is jointly continuous, see \cite[II.1.5]{HMS}.

\begin{problem} Assume a scattered compact space $X$ admits a separately continuous $n$-mean. Does $X$ admit a continuous $n$-mean?
\end{problem}

The constructions of symmetric powers $SP^n$ and $SP^n_G$ are examples of normal functors in the category $\mathbf{Comp}$ of compact Hausdorff spaces and their continuous maps, see \cite[2.3.2]{TZ}.
So, the following problem can be considered as a general version of Problem~\ref{pr2}. 

\begin{problem} Is a normal functor $F:\mathbf{Comp}\to\mathbf{Comp}$ a power functor if each (scattered) compact space $X$ is a retract of $F(X)$?
\end{problem}

According to \cite{BGR} and \cite{BK}, another example of a scattered compact space admitting no separately continuous semilattice operation is the  Mr\'owka space $\psi\IN$. By definition, the {\em Mr\'owka space} is the Stone space of the Boolean algebra generated by $\mathcal A\cup\big\{\{n\}\big\}_{n\in\IN}$ for some maximal almost disjoint family $\mathcal A$ of infinite subsets of $\IN$.

\begin{problem} Does the Mr\'owka space $\psi \IN$ admit a (separately) continuous $n$-mean for some $n\ge 2$?
\end{problem}

\end{document}